\newtheorem{theorem}{Theorem}
\newtheorem{proposition}[theorem]{Proposition}
\newtheorem*{example}{Example}
\begin{document}

\title{Linking numbers of Montesinos links}

\author[H. Kim]{Hyoungjun Kim}
\address{Institute of Data Science, Korea University, Seoul 02841, Korea}
\email{kimhjun@korea.ac.kr}

\author[S. No]{Sungjong No}
\address{Department of Mathematics, Kyonggi University, Suwon 16227, Korea}
\email{sungjongno@kgu.ac.kr}

\author[H. Yoo]{Hyungkee Yoo}
\address{Department of Mathematics Education, Sunchon National University, Sunchon 57922, Korea}
\email{hyungkee@scnu.ac.kr}
	
\keywords{linking number, rational links, Montesinos links}
\subjclass[2020]{57K10}

\begin{abstract}
The linking number of an oriented two-component link is an invariant indicating how intertwined the two components are.
Tuler proved that the linking number of a two-component rational $\frac{p}{q}$-link is 
$$\sum^{\frac{|p|}{2}}_{k=1} (-1)^{\big\lfloor (2k-1) \frac{q}{p} \big\rfloor }.$$
In this paper, we provide a simple proof the above result, and introduce the numerical algorithm to find linking numbers of rational links.
Using this result, we find linking numbers between any two components in a Montesinos link.
\end{abstract}

\maketitle

\section{Introduction}\label{sec:int}

All definitions and explanations throughout this paper are within the piecewise linear category or the differentiable category.
A {\it link\/} is a disjoint union of one dimensional spheres embedded in $\mathbb{R}^3$ or $S^3$.
In here, each one dimensional sphere is called a {\it component\/} of the link.
If a link has only one component, then we call it a {\it knot\/}.
In 1830's,
Gauss introduced an integral invariant for a two-component link.
Consider an orientated two-component link.
We can regard the above link as two embeddings
$K_1, K_2 : S^1 \hookrightarrow \mathbb{R}^3$.
Then there is a map $\Gamma : S^1 \times S^1 \rightarrow S^2$ defined by
$$\Gamma(s, t) = \frac{K_1(s)-K_2(t)}{|K_1(s)-K_2(t)|}.$$
This function is called the Gauss map.
Then the degree of the Gauss map is called the {\it linking number\/} of $K_1$ and $K_2$, and is denoted by $lk(K_1, K_2)$.
That is,
$${\rm lk}(K_1, K_2)
=\frac{1}{4\pi}\oint_{K_1}\oint_{K_2}
\frac{\mathbf{r}_1-\mathbf{r}_2}{|\mathbf{r}_1-\mathbf{r}_2|^3}\cdot(d\mathbf{r}_1 \times d\mathbf{r}_2)$$
where $\mathbf{r}_1$ and $\mathbf{r}_2$ are the positions of two points on $K_1$ and $K_2$ respectively~\cite{R}.
Since the degree does not change under any homotopic maps,
the linking number is the invariant for oriented two-component links.
If a link has more than two components,
then the linking number of the different component pairs are defined in the same way.

For any knots and links in $\mathbb{R}^3$,
there is a regular projection onto $\mathbb{R}^2$.
The regular projection image with crossing information of a link is called a {\it link diagram\/}.
For any link diagram, we can choose an orientation of each component.
In the oriented link diagram,
we assign the positive sign to a crossing between different link components if overstrand passes through understrand from left to right as drawn in the left side of Figure~\ref{fig:linking}.
Otherwise, we assign the negative sign.
It is known that the half of sum of all signs in the oriented link diagram is equal to the linking number of the link~\cite{A, R}.
For example, the linking number of a link in the right side of Figure~\ref{fig:linking} is equal to 1.

\begin{figure}[h!]
\centering
\includegraphics{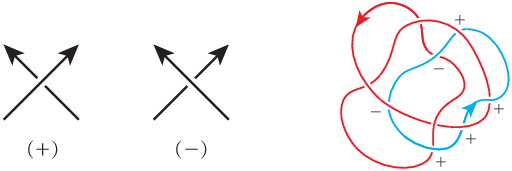}
\caption{Assigning the signs of crossings in a link.}
\label{fig:linking}
\end{figure}

In this paper, we deal with linking numbers of Montesinos links which consist of rational tangles and half-twists.
In Section~\ref{sec:pre}, we introduce a rational tangle and its pillowcase form.
We also give the structure of Montesinos links.
In Section~\ref{sec:link}, we provide the simple proof to find linking numbers of rational links.
Furthermore, we introduce the numerical algorithm for this result.
Finally, in Section~\ref{sec:monte}, we find linking numbers of Montesinos links.

\section{Preliminary}\label{sec:pre}

A {\it 2-tangle\/} is a proper embedding of the disjoint union of two arcs in a three-dimensional ball such that four endpoints of two arcs lie on the boundary sphere of the ball.
We label these four endpoints NW, NE, SW and SE as in the compass directions.
A {\it rational tangle} is a 2-tangle obtained by using horizontal twists and vertical twists from $T_0$ as drawn in Figure~\ref{fig:ele_tangle}.
In this process, we obtain a sequence of integers indicating how much horizontal twists and vertical twists have been used.
A tangle $T(a_1,a_2,\dots,a_n)$ is obtained from $T_0$ by using $a_1$ times of horizontal twists, then $a_2$ times of vertical twists, and repeat until $a_n$.
This is called the {\it Conway notation}.
The fraction of rational tangle $T(a_1,a_2,\dots,a_n)$ is
$$\cfrac{p}{q}=a_n + \cfrac{1}{a_{n-1} + \cfrac{1}{ \ddots +\cfrac{1}{a_2+ \cfrac{1}{a_1} }}}$$
where $p$ and $q$ are relatively prime.
A rational $\frac{p}{q}$-tangle is a tangle whose fraction is $\frac{p}{q}$.
Conway~\cite{C} showed that two rational tangles $T(a_1,a_2,...,a_n)$ and $T(a'_1,a'_2,...,a'_m)$ 
are equivalent if and only if
$$a_n + \cfrac{1}{a_{n-1} + \cfrac{1}{ \ddots +\cfrac{1}{a_2+ \cfrac{1}{a_1} }}}=a'_m + \cfrac{1}{a'_{m-1} + \cfrac{1}{ \ddots +\cfrac{1}{a'_2+ \cfrac{1}{a'_1} }}}.$$

\begin{figure}[h!]
\centering
\includegraphics[width=0.9\textwidth]{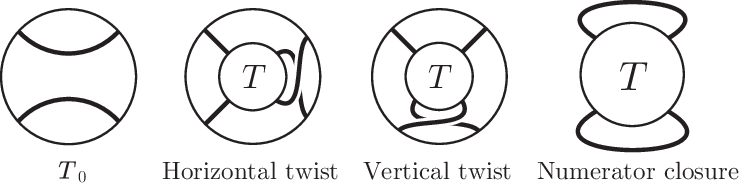}
\caption{Operations for rational tangles.}
\label{fig:ele_tangle}
\end{figure}

In a rational tangle $T$, the strand with NW as an endpoint can have the remaining three points SW, SE, and NE as an endpoint, depending on the relation between $p$ and $q$.
If such a strand has SW, SE and NE as an endpoint, then a rational tangle $T$ is called $V$-, $D$- and $H$-{\it tangle\/}, respectively.
Remark that a $\frac{p}{q}$-tangle is
a $V$-tangle if and only if $p$ is odd and $q$ is even;
a $D$-tangle if and only if $p$ is odd and $q$ is odd;
an $H$-tangle if and only if $p$ is even and $q$ is odd.
A {\it numerator closure} $N(T)$ is a way to obtain a rational link from a rational tangle $T$ by joining two pairs of its endpoints (NW,NE) and (SW,SE) together as drawn in Figure~\ref{fig:ele_tangle}.

A {\it pillowcase form} is one of the expressions of the rational tangle that allows two arcs of the tangle lie on the boundary sphere of the tangle as drawn in Figure~\ref{fig:pcf}.
A {\it $(t,s)$-form} is a pillowcase form which has $|t|$ and $|s|$ gaps in arcs connecting two top points and two left side points, respectively.
In detail, all parts of strands in the front side have the positive slopes when $ts$ is positive, and have the negative slopes when $ts$ is negative.
It is known that if $(t,s)$ is equal to $(p,q)$, then $(t,s)$-form is equivalent to the rational $\frac{p}{q}$-tangle~\cite{C,KN}.

\begin{figure}[h!]
\centering
\includegraphics[width=0.8\textwidth]{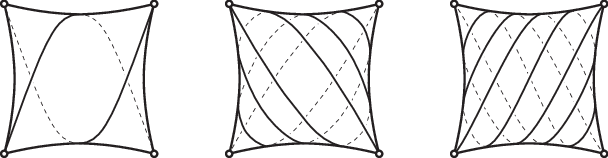}
\caption{Pillowcase forms of rational tangles with $\frac{2}{1}$, $-\frac{4}{3}$ and $\frac{5}{3}$.}
\label{fig:pcf}
\end{figure}

A {\it Montesinos link} $L$ is a link consisting of $n$ rational tangles and $e$ half twists connected in series as drawn in Figure~\ref{fig:monte}.
This link is denoted by 
$$L=M \left( \left. \frac{p_1}{q_1}, \frac{p_2}{q_2}, \cdots, \frac{p_n}{q_n} \right| e \right) $$
where each tangle $T_i$ is a rational $\frac{p_i}{q_i}$-tangle.
In here, if $e$ is negative, then the part of $e$ half twist consists of negative $|e|$ half twists.

\begin{figure}[h!]
\centering
\includegraphics{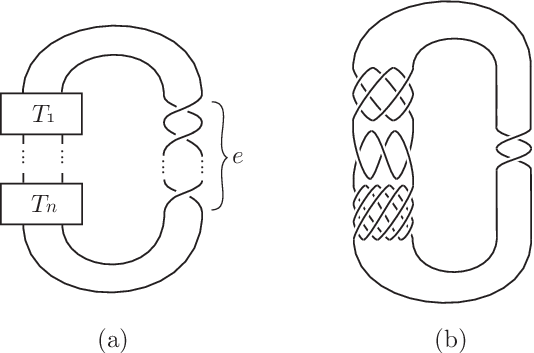}
\caption{The structure of a Montesinos link and the example of $ M \left( \left. \frac{3}{2}, -\frac{3}{1}, \frac{5}{3} \right| 2 \right)$.}
\label{fig:monte}
\end{figure}

If the Montesinos link $L$ has exactly one or two rational tangles, then $L$ is a 2-bridge link.
Thus it is sufficient to consider the case that $n \ge 3$ when we discuss about Montesinos links.

\section{Linking numbers of rational links}\label{sec:link}

In this section, we deal with the linking number of a rational $\frac{p}{q}$-link.
Let $R_{p/q}$ be an oriented rational link obtained from a rational $\frac{p}{q}$-tangle consisting two strands such that the orientation of one strand is from NE to NW, and the orientation of the other is from SW to SE by using a numerator closure.
In 1981, Tuler~\cite{T} showed the following proposition.
Using the pillowcase form, we present another simple proof of this proposition.

\begin{proposition}\label{prop:link}
The linking number of an oriented rational link $R_{p/q}$ is
$$lk(R_{p/q})=\sum^{\frac{|p|}{2}}_{k=1} (-1)^{\big\lfloor (2k-1) \frac{q}{p} \big\rfloor}$$
where $p$ and $q$ are relatively prime integers and $p$ is even.
\end{proposition}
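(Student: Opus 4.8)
The plan is to compute the linking number directly from the $(p,q)$-pillowcase form of the rational tangle, using the fact (cited above) that this form is equivalent to the rational $\tfrac{p}{q}$-tangle. In the $(p,q)$-form the two strands of the tangle appear as families of parallel line segments on the front of the pillowcase with slope of fixed sign (positive if $pq>0$, negative if $pq<0$), together with matching segments on the back; after the numerator closure the crossings between the two components are exactly the transverse intersections of front segments with back segments in the planar picture. Since $p$ is even, the tangle is an $H$-tangle and the numerator closure produces a genuine two-component link; I would first check that with the prescribed orientations (NE$\to$NW on one strand, SW$\to$SE on the other) every such crossing receives the \emph{same} sign up to the global factor $(-1)^{\text{something}}$, so that the linking number is a signed count of lattice-type intersection points.

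The key combinatorial step is to index the strands. Unrolling the pillowcase, the front strands can be taken to be the segments of the lines $px - qy \in \mathbb{Z}$ (or an affine translate thereof) inside a $1\times 1$ square, and a crossing with a back strand occurs each time such a line crosses the horizontal or vertical "seam." Tracking one component from NE to NW, one sees it meets the seam at the points whose heights are the fractional parts $\{ (2k-1)\tfrac{q}{p} \}$ for $k=1,\dots,\tfrac{|p|}{2}$ (the odd multiples arising because consecutive returns of a single component to the boundary skip every other lattice line — this is where the two-component structure enters). The sign of the $k$-th such crossing is determined by which "sheet" of the pillowcase the strand is on at that moment, i.e.\ by the parity of $\big\lfloor (2k-1)\tfrac{q}{p}\big\rfloor$: an even floor gives one sign, an odd floor the opposite. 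Summing, $lk(R_{p/q}) = \sum_{k=1}^{|p|/2} (-1)^{\lfloor (2k-1)q/p\rfloor}$, which is the claimed formula. I would present this for $p,q>0$ first and then note that replacing $\tfrac{p}{q}$ by $-\tfrac{p}{q}$ (equivalently reversing all slopes) negates every crossing sign and also negates the floor-parity contributions consistently, so the formula is unchanged in form; the case $q<0$ is handled identically by symmetry of the pillowcase form.

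The main obstacle I anticipate is the bookkeeping in the indexing step: proving rigorously that a single component, traversed once, hits the pillowcase seam precisely at the odd-multiple heights $(2k-1)\tfrac{q}{p}\bmod 1$ and in exactly this order, rather than at all multiples $k\tfrac{q}{p}$. This requires carefully following how the numerator closure joins the $4$ boundary points and how the arcs on the front and back of the pillowcase concatenate into two closed loops; the parity "$2k-1$" is forced by the fact that going from one boundary contact to the next along a single component traverses one front arc and one back arc, advancing the relevant lattice index by $2$. Once that is pinned down, the sign computation is a routine check of crossing conventions against Figure~\ref{fig:linking}, and the summation is immediate.
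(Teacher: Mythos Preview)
Your approach shares the starting point with the paper---compute in the $(p,q)$-pillowcase form---but the execution is genuinely different, and the paper's route sidesteps exactly the bookkeeping you flag as the main obstacle. Instead of counting front/back crossings in the diagram and sorting out which belong to which component, the paper first isotopes the entire strand $A$ (the one running NE$\to$NW) onto a short boundary arc $\alpha\subset\partial P$ joining NE to NW; this is legitimate because the interior of the pillowcase is a ball with nothing in it. After this move the linking number is simply the signed count of points where $B$ pierces $\alpha$. The paper then lifts to the branched cover $\mathbb{R}^2\to S\to\partial P$: the lift $\tilde B$ is the straight segment from $(0,0)$ to $(q,p)$, while $\varphi^{-1}(\alpha)$ is the union of the horizontal intervals $(n,n+1)\times\{2m+1\}$, i.e.\ horizontal lines at \emph{odd} heights with alternating orientations. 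The odd heights are why only the values $(2k-1)\tfrac{q}{p}$ appear, and the alternating orientations of neighbouring intervals are why the sign is $(-1)^{\lfloor(2k-1)q/p\rfloor}$; both facts fall out of the covering description with no component-tracking at all.

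Your plan is not wrong, and would eventually yield the same formula, but the step you correctly identify as delicate---showing that a single component hits the seam precisely at the odd-index heights and in that order---is handled for free by the paper's shrink-then-lift manoeuvre. If you want to keep your more combinatorial line of argument, the cleanest fix is to adopt the isotopy of $A$ to $\alpha$ first: once one component has been pushed to the boundary, there are no self-crossings of $A$ to worry about and every intersection with $B$ is automatically an inter-component crossing, so the parity bookkeeping collapses to reading off which unit interval of $\varphi^{-1}(\alpha)$ each intersection lands in.
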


\begin{proof}[Alternative proof of Proposition~\ref{prop:link}]
Let $p$ and $q$ be integers satisfying the assumption of the proposition,
and let $T$ be a rational $\frac{p}{q}$-tangle.
Since $p$ is even,
the numerator closure $N(T)$ of $T$ has two components.
Let $A$ and $B$ be oriented arcs from NE to NW, and from SW to SE, respectively.

Consider the pillowcase form $P$ of $T$ as drawn in Figure~\ref{fig:pillow} (a).
As the figure, we assume that $\alpha$ is an oriented arc from NE to NW in the boundary of $P$.
Since there is no obstruction in the interior of $P$,
we can shrink $A$ to $\alpha$ with fixing end points as drawn in Figure~\ref{fig:pillow} (b).
To compute the linking number of the rational link $N(T)$,
it is sufficient to observe the intersection of $B$ and $\alpha$.

\begin{figure}[h!]
\centering
\includegraphics{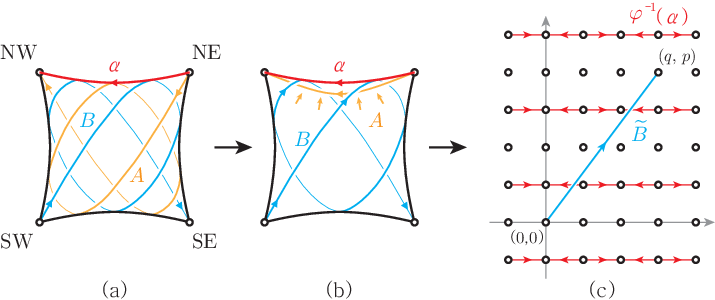}
\caption{The construction of the branched covering of $\partial P$.}
\label{fig:pillow}
\end{figure}

Now consider the branched covering of $\partial P$.
Let $S$ be the standard torus.
Then there is a 2-fold branched covering map
$$\varphi_1 : S \rightarrow \partial P$$
with branch set $\{$NE, NW, SE, SW$\}$.
This map is the quotient map by involution.
Since the universal cover of $S$ is $\mathbb{R}^2$,
there is a covering map
$$\varphi_2 : \mathbb{R}^2 \rightarrow S.$$
The preimage of four points $\{$NE, NW, SE, SW$\}$ under $\varphi_1 \circ \varphi_2$ is $\mathbb{Z}^2$.
Thus $\varphi = \varphi_1 \circ \varphi_2$
becomes a branched covering map with branch set $\{$NE, NW, SE, SW$\}$.
Without loss of generality, we may assume that $(0,0)$, $(0,1)$, $(1,0)$ and $(1,1)$ of $\mathbb{R}^2$ are corresponding to $SW$, $SE$, $NW$ and $NE$ under $\varphi$, respectively.
Then the line segment between $(0,0)$ and $(q,p)$ is the lifting $\Tilde{B}$ of the arc $B$ in $\partial P$.
The preimage of $\alpha$ is $$\displaystyle{\varphi^{-1}(\alpha)=\bigcup_{n,m \in \mathbb{Z}} \big( (n,n+1) \times \{ 2m+1 \} \big)}.$$
Since $\alpha$ is an oriented arc from NE to NW,
the orientation of $\varphi^{-1}(\alpha)$ appears alternately as drawn in Figure~\ref{fig:pillow} (c).
We assign the positive sign to an intersection point between $\Tilde{B}$ and $\varphi^{-1}(\alpha)$ if $\Tilde{B}$ crosses from left to right based on the direction of $\varphi^{-1}(\alpha)$ at the point, and the negative sign for otherwise.
Thus the sign of $k$-th crossing between $\Tilde{B}$ and $\varphi^{-1}(\alpha)$ is 
$$(-1)^{\big\lfloor (2k-1) \frac{q}{p} \big\rfloor}.$$

We remark that each intersection point between $\Tilde{B}$ and $\varphi^{-1}(\alpha)$ including the information of the assigned sign corresponds to a full-twist between $A$ and $B$ in Figure~\ref{fig:pillow} (b).
By the definition of the linking number, it is sufficient to find that the sum of signs of all intersection points between $\Tilde{B}$ and $\varphi^{-1}(\alpha)$ to obtain the linking number of $N(T)$.
Thus we obtain the result.
\end{proof}

Proposition~\ref{prop:link} shows a rigorous mathematical proof of how to obtain the linking number.
However, the calculation may be complicated depending on the value of $|p|$.
We provide the numerical algorithm to calculate linking numbers of rational links by using Proposition~\ref{prop:alg} which is simpler than the previous result.

\begin{proposition}\label{prop:alg}
For an oriented rational link, the following are satisfied.
\begin{enumerate}
    \item $lk(R_{0/1}) = 0$.
    \item $lk(R_{p/q}) = -lk(R_{(-p)/q})$.
    \item $lk(R_{p/q}) = lk(R_{(p+2q)/q}) -1$.
    \item $lk(R_{p/q}) = -lk(R_{p/(p+q)})$.
\end{enumerate}
\end{proposition}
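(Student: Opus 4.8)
The plan is to deduce parts (1), (2), and (4) directly from the closed formula of Proposition~\ref{prop:link}, and to obtain part (3) geometrically as the effect of splicing in a full twist. It is convenient to record first that $\gcd(p,q)=1$ and $p$ even force $q$ odd, so that for $1\le 2k-1\le|p|-1$ one has $(2k-1)\tfrac{q}{p}\notin\mathbb{Z}$; combined with the elementary identities $\lfloor x+n\rfloor=\lfloor x\rfloor+n$ for $n\in\mathbb{Z}$ and $\lfloor -x\rfloor=-\lfloor x\rfloor-1$ for $x\notin\mathbb{Z}$, this is all the arithmetic that is needed. In each case one should also note that the new numerator is again even and coprime to the new denominator, so that Proposition~\ref{prop:link} applies.

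For (1), since $p=0$ the upper limit $|p|/2$ of the sum is $0$, so $lk(R_{0/1})$ is an empty sum, hence $0$ (equivalently, $R_{0/1}$ is a split union of two unknots). For (2), the sums for $R_{p/q}$ and $R_{(-p)/q}$ both have length $|p|/2$ and $(2k-1)\tfrac{q}{-p}=-(2k-1)\tfrac{q}{p}$, so $\lfloor -x\rfloor=-\lfloor x\rfloor-1$ makes the $k$-th summand of $lk(R_{(-p)/q})$ equal to minus the $k$-th summand of $lk(R_{p/q})$; summing gives (2). For (4), the denominators agree so the two sums have the same length, and since
$$(2k-1)\frac{p+q}{p}=(2k-1)+(2k-1)\frac{q}{p},\qquad\text{we get}\qquad\Big\lfloor (2k-1)\tfrac{p+q}{p}\Big\rfloor=(2k-1)+\Big\lfloor (2k-1)\tfrac{q}{p}\Big\rfloor,$$
and $2k-1$ being odd negates every summand; this is exactly $lk(R_{p/(p+q)})=-lk(R_{p/q})$.

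For (3), the starting observation is that $\tfrac{p}{q}+2=\tfrac{p+2q}{q}$, and that adding $2$ to the Conway fraction of a rational tangle is realized by splicing a full positive twist onto one side. Passing from $R_{p/q}$ to $R_{(p+2q)/q}$ is therefore a purely local modification of the diagram: inside a small disk on the east side, where in $R_{p/q}$ there are just two parallel strands — one through the point NE, which lies on the same component as the arc oriented from NE to NW, and one through the point SE, which lies on the same component as the arc oriented from SW to SE — one inserts a full twist, creating two new crossings between the two components. Both the two local strands and their orientations are dictated by the definition of $R_{p/q}$ and do not depend on $p$ or $q$, so the change in the linking number caused by this splice is the same for every $R_{p/q}$; evaluating it in the case $p/q=0/1$, where $R_{0/1}$ is a $2$-component unlink and $R_{2/1}$ is the Hopf link with linking number $1$, shows the change is $+1$. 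Hence $lk(R_{p/q})=lk(R_{(p+2q)/q})-1$. I expect the one point requiring care to be precisely this reduction: one must verify that the diagram change really is supported in a disk and that the two strands there carry $p,q$-independent orientations, so that the computation genuinely reduces to the single case $p/q=0/1$.
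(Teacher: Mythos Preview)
Your proof is correct. The approach differs from the paper's mainly in parts (2) and (4): the paper argues these geometrically---(2) because $R_{(-p)/q}$ is the mirror image of $R_{p/q}$, and (4) because appending a single vertical twist to the tangle changes the fraction from $\tfrac{p}{q}$ to $\tfrac{p}{p+q}$, adds only a self-crossing, and flips the orientation of one component so that every crossing sign reverses---whereas you read both identities directly off Tuler's formula via the floor identities $\lfloor -x\rfloor=-\lfloor x\rfloor-1$ and $\lfloor x+n\rfloor=\lfloor x\rfloor+n$. Your route is clean and short, at the cost of making Proposition~\ref{prop:alg} logically depend on Proposition~\ref{prop:link}; the paper's geometric proof keeps Proposition~\ref{prop:alg} independent, which matters if one wants to view it as a standalone recursive algorithm rather than as a corollary of the closed formula. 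For (3) the two arguments are essentially the same; the paper simply asserts that the two added horizontal crossings contribute $+1$, while you justify the sign by specializing to the base case $R_{0/1}\to R_{2/1}$, which is a nice way to avoid a sign-convention check.
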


\begin{proof}
(1) A rational link $R_{0/1}$ is trivial.
So $lk(R_{0/1})=0$.

(2) Since $R_{(-p)/q}$ has an embedding which is a mirror image of $R_{p/q}$, $lk(R_{p/q}) = -lk(R_{(-p)/q})$.

(3) Let $R_{p/q}$ be a rational link obtained from a rational tangle $T$ as drawn in Figure~\ref{fig:twist}.
We further assume that the fraction of the tangle $T$ is
$$\cfrac{p}{q}=a_n + \cfrac{1}{ \ddots +\cfrac{1}{a_2+ \cfrac{1}{a_1}}}.$$

First suppose that $R_{p'/q'}$ is a rational link obtained from a rational tangle $T'$ as drawn in Figure~\ref{fig:twist} (a).
This means that the tangle $T'$ is obtained from the tangle $T$ by adding two horizontal twists.
Thus the fraction $\frac{p'}{q'}$ of the tangle $T'$ is
$$\cfrac{p'}{q'}=(a_n +2) + \cfrac{1}{ \ddots +\cfrac{1}{a_2+ \cfrac{1}{a_1} }} = 2 + \cfrac{p}{q} = \cfrac{p+2q}{q}.$$
Furthermore, since the linking number increases by 1 at the two added horizontal twists, $lk(R_{p'/q'})=lk(R_{p/q})+1$.

(4) Now suppose that $R_{p''/q''}$ is a rational link obtained from a rational tangle $T''$ as drawn in Figure~\ref{fig:twist} (b).
This means that the tangle $T''$ is obtained from the tangle $T$ by adding one vertical twist.
Thus the fraction $\frac{p''}{q''}$ of the tangle $T''$ is
$$\cfrac{p''}{q''}=0 + \cfrac{1}{1+\cfrac{1}{a_n + \cfrac{1}{\ddots+ \cfrac{1}{a_1} }}} = 0 + \cfrac{1}{1+\cfrac{q}{p}} = \cfrac{p}{p+q}.$$
Note that the only different crossings between $R_{p/q}$ and $R_{p''/q''}$ is a self crossing.
Moreover, the assigned signs of all crossings in $R_{p/q}$ are reversed in $R_{p''/q''}$.
Hence $lk(R_{p/q}) = -lk(R_{p/(p+q)})$.
\end{proof}

\begin{figure}[h!]
\centering
\includegraphics[width=0.57\textwidth]{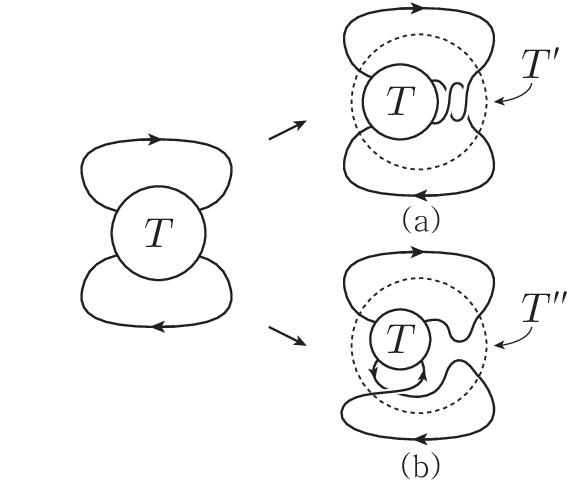}
\caption{Two types of adding twists}
\label{fig:twist}
\end{figure}

Proposition~\ref{prop:alg} shows the difference of the linking number between a rational link obtained from the given tangle and a transformed tangle by some rules.
By using this proposition, we can easily obtain the linking number of a rational link.
For a rational link $R_{2n/1}$, 
$$lk(R_{2n/1})=lk(R_{2n-2/1})+1= \cdots = lk(R_{0/1})+n=n.$$
The following example shows the two ways to obtain linking numbers of rational links that are Tuler's result and the process of Proposition~\ref{prop:alg}.

\begin{example}
The linking number of $R_{26/9}$.
\end{example}

First we use Tuler's result to obtain the linking number of an oriented rational link $R_{26/9}$.
$$\begin{array}{rl} 
lk(R_{26/9}) = & \sum^{13}_{k=1} (-1)^{\big\lfloor (2k-1) \frac{9}{26} \big\rfloor} \\[8pt]
= & (-1)^{\big\lfloor \frac{9}{26} \big\rfloor}+(-1)^{\big\lfloor \frac{27}{26} \big\rfloor}+(-1)^{\big\lfloor \frac{45}{26} \big\rfloor}+(-1)^{\big\lfloor \frac{63}{26} \big\rfloor}\\ 
& + (-1)^{\big\lfloor \frac{81}{26} \big\rfloor}+ (-1)^{\big\lfloor \frac{99}{26} \big\rfloor}+(-1)^{\big\lfloor \frac{117}{26} \big\rfloor}+(-1)^{\big\lfloor \frac{135}{26} \big\rfloor}\\
&+ (-1)^{\big\lfloor \frac{153}{26} \big\rfloor}+(-1)^{\big\lfloor \frac{171}{26} \big\rfloor}+(-1)^{\big\lfloor \frac{189}{26} \big\rfloor}+(-1)^{\big\lfloor \frac{207}{26} \big\rfloor}\\
&+ (-1)^{\big\lfloor \frac{225}{26} \big\rfloor}\\[8pt]
= & (-1)^{0}+(-1)^{1}+(-1)^{1}+(-1)^{2}+ (-1)^{3}\\
& +(-1)^{3}+(-1)^{4}+(-1)^{5}+(-1)^{5}+(-1)^{6}\\
& +(-1)^{7}+(-1)^{7}+ (-1)^{8}\\[6pt]
= & -3
\end{array}$$

Now we use the process of Proposition~\ref{prop:alg} to obtain the linking number of an oriented rational link $R_{26/9}$.
\begin{align*} 
lk(R_{26/9}) & = lk(R_{8/9})+1\\ 
 & = -lk(R_{8/1})+1 \\
 & = -(lk(R_{0/1})+4)+1=-3
\end{align*}

This example shows that the process of Proposition~\ref{prop:alg} makes it easier to obtain linking numbers of rational links.

\section{Linking numbers of Montesinos links}\label{sec:monte}\

In this section,
we deal with linking numbers of Montesinos links.
Let $L$ be a Montesinos link
$M \left( \left. \frac{p_1}{q_1}, \frac{p_2}{q_2}, \cdots, \frac{p_n}{q_n} \right| e \right)$.
Recall that there are three types of rational tangles, $V$-, $D$- and $H$-tangle, which depend on locations of endpoints of two strands.
We divide into two cases according to the existence of $H$-tangles in $L$.

\subsection{The case there is an $H$-tangle in $L$}\

We assume that $L$ contains an $H$-tangle.
If $L$ contains exactly one $H$-tangle, then $L$ is a 1-component link.
So we only need to consider the case that $L$ contains at least two $H$-tangles.
Note that all crossings between different components are only appeared in $H$-tangles.
We may assume that $T_i$ is an $H$-tangle such that two components $C$ and $C'$ in $L$ are linked at $T_i$.
If there is another $H$-tangle $T_j$ such that $C$ and $C'$ are linked at $T_j$, then $L$ has exactly two components $C$ and $C'$.
This implies that if $L$ has at least three $H$-tangles, then $C$ and $C'$ are only linked at $T_i$.
We first give the following theorem when $L$ has at least three $H$-tangles.

\begin{theorem}\label{thm:mpe}
Let $L=M \left( \left. \frac{p_1}{q_1}, \frac{p_2}{q_2}, \cdots, \frac{p_n}{q_n} \right| e \right)$ be a Montesinos link. Further assume that $C$ and $C'$ are distinct components of $L$. 
If there are at least three $H$-tangles in $L$, then the linking number of $C$ and $C'$ is
$$lk(C,C') = \begin{cases} \pm lk(R_{p_k/q_k}) & \text{if } C \text{ and } C' \text{ are linked at the } \frac{p_k}{q_k} \text{-tangle } \\
0 & \text{otherwise,} \end{cases}$$
where $R_{p_k/q_k}$ is the rational $\frac{p_k}{q_k}$-link.
\end{theorem}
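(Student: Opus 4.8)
The plan is to localize the entire linking number $lk(C,C')$ to the crossings inside a single $H$-tangle and then identify that contribution with $lk(R_{p_k/q_k})$. I would start from the two structural facts recorded just before the theorem: every crossing of $L$ between two distinct components lies in an $H$-tangle, and, because $L$ has at least three $H$-tangles, there is at most one $H$-tangle through which both $C$ and $C'$ pass. If there is no such $H$-tangle, then $C$ and $C'$ share no crossing, so $lk(C,C')=0$; this settles the ``otherwise'' case. From now on I assume $C$ and $C'$ both run through an $H$-tangle $T_k$, which is then a rational $\frac{p_k}{q_k}$-tangle with $p_k$ even (hence $q_k$ odd), and whose two strands are exactly the NW--NE strand and the SW--SE strand, one a sub-arc of $C$ and the other a sub-arc of $C'$.

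Combining the two facts above, every crossing of $L$ between $C$ and $C'$ occurs inside $T_k$, and among the crossings inside $T_k$ only those between the two strands involve both $C$ and $C'$ (self-crossings of either strand, or crossings with a third component, are irrelevant to $lk(C,C')$). Hence $lk(C,C')$ equals one half of the signed count of inter-strand crossings of $T_k$, the signs being read off from the orientations that $C$ and $C'$ induce on the two strands.

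Next I would bring in $R_{p_k/q_k}$. By definition $R_{p_k/q_k}$ is the numerator closure of a rational $\frac{p_k}{q_k}$-tangle, oriented with one strand running NE to NW and the other SW to SE; taking the numerator closure of $T_k$ therefore yields $R_{p_k/q_k}$ up to the orientation of its two components, and the closure arcs carry no crossings. Consequently $lk(R_{p_k/q_k})$ is one half of the signed count of the inter-strand crossings of $T_k$ taken with this reference orientation. The orientations that $C$ and $C'$ induce on the two strands either match the reference pair or differ from it by reversing exactly one strand (reversing both changes nothing), and reversing one strand flips the sign of every inter-strand crossing; so the two signed counts agree up to a global sign, giving $lk(C,C') = \pm lk(R_{p_k/q_k})$.

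The part requiring the most care is this last paragraph: making precise the identification of the crossings of $T_k$ as they sit inside $L$ with the crossings of $N(T_k)=R_{p_k/q_k}$, and checking that the four possible orientation conventions on the two strands of $T_k$ collapse to the single ambiguity $\pm$. By contrast, the localization in the first two paragraphs is essentially immediate from the structural facts about Montesinos links already established, which is why the hypothesis of at least three $H$-tangles is used only through the statement that at most one $H$-tangle carries both $C$ and $C'$.
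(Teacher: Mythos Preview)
Your proposal is correct and follows essentially the same route as the paper's proof: localize all $C$--$C'$ crossings to a single $H$-tangle $T_k$ using the structural facts established before the theorem, then identify the signed crossing count there with that of $R_{p_k/q_k}$ up to a global sign. The paper is terser---it simply asserts that ``$C\cup C'$ has the same linking number as $R_{p_k/q_k}$ in absolute value''---whereas you spell out the orientation analysis that justifies the $\pm$; this extra detail is sound and is exactly what the paper leaves implicit.
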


\begin{proof}
Let $C$ and $C'$ be distinct components of a Montesinos link $L$.
First suppose that $C$ and $C'$ are not linked. 
Hence the linking number between $C$ and $C'$ is equal to zero.

Now suppose that $C$ and $C'$ are linked.
Note that rational tangles of $L$ are connected in cyclic order.
Since $L$ has at least three $H$-tangles, we may assume that $C$ and $C'$ are only linked at an $H$-tangle, say a rational $\frac{p_k}{q_k}$-tangle.
Note that the linking number between $C$ and $C'$ is determined by crossings between $C$ and $C'$.
So, it sufficient to check a rational $\frac{p_k}{q_k}$-tangle to obtain $lk(C,C')$.
This means that $C \cup C'$ has the same  linking number as $R_{p_k/q_k}$ in absolute value.
Therefore, $lk(C,C')= \pm lk(R_{p_k/q_k})$.
\end{proof}

It remains to consider the case that $L$ contains exactly two $H$-tangles.
Since $C$ and $C'$ are linked at exactly two $H$-tangles, we should consider all crossings in these two tangles.
We give the following theorem when $L$ has exactly two $H$-tangles.

\begin{theorem}\label{thm:tqe}
Let $L=M \left( \left. \frac{p_1}{q_1}, \frac{p_2}{q_2}, \dots, \frac{p_n}{q_n} \right| e \right)$ be a Montesinos link such that $L$ has exactly two $H$-tangles $T_i$ and $T_j$.
Then the linking number of $L$ is
$$lk(L) = \pm \left( lk(R_{p_i/q_i})+(-1)^{\sigma}lk(R_{p_j/q_j}) \right),$$
where $\displaystyle{\sigma = \sum_{k=1}^n q_k +e}$.
\end{theorem}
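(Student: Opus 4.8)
The plan is to reduce $lk(L)$ to signed crossing counts inside the two $H$-tangles. Since $C$ and $C'$ are the only components of $L$, since every crossing between $C$ and $C'$ lies in an $H$-tangle, and since $T_i$ and $T_j$ are the only $H$-tangles, the linking number equals $\frac12$ of the signed count of the crossings between $C$ and $C'$ inside $T_i$ plus $\frac12$ of that count inside $T_j$. Inside $T_i$ the two components occupy its two arcs, and the crossings of $T_i$ together with their over/under information and the local orientations at the four endpoints of $T_i$ are exactly the data that determine $lk(R_{p_i/q_i})$, up to a single overall sign, because negating one of the two strands of a two-component diagram negates every crossing between them while negating both changes nothing. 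Writing $\varepsilon_i\in\{\pm1\}$ for the sign recording whether the orientations of $C$ and $C'$ along $T_i$ agree or disagree with the standard orientation pair used to define $R_{p_i/q_i}$ (one arc from NE to NW, the other from SW to SE), and $\varepsilon_j$ for the analogous sign at $T_j$, one gets
$$lk(L)=\varepsilon_i\,lk(R_{p_i/q_i})+\varepsilon_j\,lk(R_{p_j/q_j}).$$
As the overall sign of $lk(L)$ is immaterial, it remains to prove $\varepsilon_i\varepsilon_j=(-1)^{\sigma}$.

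For this I would trace $C$ and $C'$ around the cyclic chain of $L$ and bookkeep orientations. Each tangle other than $T_i$ and $T_j$ is a $V$- or $D$-tangle; by the first paragraph all of its crossings are irrelevant to $lk(L)$, so it acts merely as a fixed connector of its two through-strands, joining the four endpoints in a pattern depending only on its type, and likewise the block of $e$ half-twists acts as a twist region whose only relevant effect on a pair of strands running through it is to interchange them when $e$ is odd. Following the chain from $T_i$ to $T_j$ and composing these connectors, one reads off whether the resulting identification of the boundary orientations of $T_i$ with those of $T_j$ preserves or reverses the relation ``the two strands run in agreement, respectively in disagreement, with the standard orientation pair.'' One then checks that a $D$-tangle reverses this relation, a $V$-tangle preserves it, and the half-twist block reverses it exactly when $e$ is odd, so that $\varepsilon_i\varepsilon_j=(-1)^{d+e}$, where $d$ is the number of $D$-tangles of $L$. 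Finally, a rational $\frac{p}{q}$-tangle has $q$ even precisely when it is a $V$-tangle and $q$ odd precisely when it is a $D$- or $H$-tangle, so, as $L$ has exactly two $H$-tangles, $\sum_{k=1}^{n}q_k\equiv d\pmod 2$; hence $d+e\equiv\sigma\pmod 2$ and $\varepsilon_i\varepsilon_j=(-1)^{\sigma}$.

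The hard part will be the orientation bookkeeping in this tracing step: one must fix once and for all what the standard orientation pair means at each tangle, track precisely how each elementary connector permutes the two through-strands and acts on their directions of travel, and treat uniformly the two arcs of the cycle joining $T_i$ to $T_j$ — only one of which contains the half-twist block — as well as the fact that each component runs back and forth along its own arc. A convenient way to organize this is to fix a single reference orientation on $L$, express $\varepsilon_i$ and $\varepsilon_j$ as agreement/disagreement parities read off locally at $T_i$ and at $T_j$, and observe that $\varepsilon_i\varepsilon_j$ is then the product, taken over all intermediate $V$- and $D$-tangles and over the half-twist block, of their individual reversing-or-preserving contributions; the parity count above then finishes the proof. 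A few degenerate configurations — for instance $T_i$ and $T_j$ adjacent in the chain, or small values of $e$ or of some $q_k$ — should be verified directly so that no sign is lost.
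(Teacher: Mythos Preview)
Your proposal is correct and follows essentially the same route as the paper: reduce $lk(L)$ to the signed crossing counts inside the two $H$-tangles, normalize the orientation at $T_i$ so that its contribution is $lk(R_{p_i/q_i})$, and then determine the relative sign at $T_j$ by a parity count of $D$-tangles (together with the parity of $e$), using that $q_k$ is odd exactly for $D$- and $H$-tangles and that there are exactly two $H$-tangles. The only cosmetic difference is that the paper treats the half-twist block as an extra tangle $T_{n+1}$ and tracks the orientations of the two components $C_1$, $C_2$ separately along the two arcs of the cycle (leading to four pictured cases), whereas you package the same information as a single parity product $\varepsilon_i\varepsilon_j=(-1)^{d+e}$ over all intermediate connectors; the resulting computation $d+e\equiv\sigma\pmod 2$ is identical.
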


\begin{proof}
Let $L$ be a Montesinos link with $n$ rational $\frac{p_i}{q_i}$-tangles $T_i$ and $e$ half twists.
For convenience of proof,
we regard the part of $e$ half twists to be a tangle $T_{n+1}$.
Then $T_{n+1}$ is a $D$-tangle if $e$ is odd, and is a $V$-tangle if $e$ is even.
Let $C_1$ and $C_2$ be distinct components of Montesinos link $L$.
We further assume that $C_1$ and $C_2$ are linked at $T_i$ and $T_j$ for $i<j$.
Without loss of generality, we may assume that $C_1$ is a component containing $T_{n+1}$, and $C_2$ is the other.

Choose an orientation of $L$ such that $T_i$ has an orientation as drawn in Figure~\ref{fig:H-tangles}.
Then the orientation of the other $H$-tangle $T_j$ is determined by the number of $D$-tangles of $C_1$ and $C_2$.
In detail, the orientation of $C_1$ in $T_j$ is from SW to SE if $C_1$ has even number of $D$-tangles, and from SE to SW for otherwise.
Furthermore, the orientation of $C_2$ in $T_j$ is from NE to NW if $C_2$ has even number of $D$-tangles, and from NW to NE for otherwise.

\begin{figure}[h!]
\centering
\includegraphics{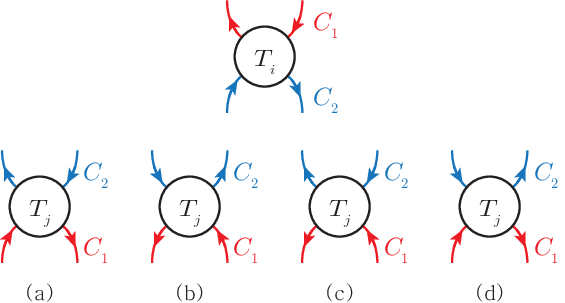}
\caption{Four cases of the orientation of $T_j$.}
\label{fig:H-tangles}
\end{figure}

Remark that the linking number between $C_1$ and $C_2$ is determined by the signs of crossings between them.
The crossing between $C_1$ and $C_2$ are only appeared in $T_i$ and $T_j$.
The sum of signs of all crossings in $T_i$ is equal to $lk(R_{p_i/q_i})$.
For the sum of signs of all crossings in $T_j$, we consider the orientations of two strands consisting of $T_j$.

Now we have four candidates for the orientation of $C_1$ and $C_2$ in $T_j$ as drawn in Figure~\ref{fig:H-tangles}.
In the figure (a) and (b), the sum of signs of all crossings in $T_j$ is equal to $lk(R_{p_j/q_j})$, and there are even number of $D$-tangles among $T_1,\dots,T_{n+1}$.
In the figure (c) and (d), the sum of signs of all crossings in $T_j$ is equal to $-lk(R_{p_j/q_j})$, and there are odd number of $D$-tangles.
Note that $T_{n+1}$ is a $D$-tangle when $e$ is odd, and a $V$-tangle when $e$ is even.
So the number of $D$-tangles is congruent to  $\displaystyle{\sum_{k=1}^n q_k +e}$ modulo 2.
Thus the linking number of $L$ with the chosen orientation is
$$ lk(R_{p_i/q_i})+(-1)^{\sigma}lk(R_{p_j/q_j}),$$
where $\displaystyle{\sigma = \sum_{k=1}^n q_k +e}$.
The absolute value of the linking number is not affected by the orientation of each component.
Therefore, the linking number of $L$ is
$$lk(L) = \pm \left( lk(R_{p_i/q_i})+(-1)^{\sigma}lk(R_{p_j/q_j}) \right).$$
\end{proof}

\subsection{The cases there is no $H$-tangle in $L$}\

We assume that $L$ does not contain any $H$-tangle.
If $\sigma$ is odd then $L$ is 1-component link where $\sigma=\displaystyle{\sum_{k=1}^n {q_k} + e}$.
So we only need to consider that $\sigma$ is even.

\begin{theorem}\label{thm:aqo}
Let $L=M \left( \left. \frac{p_1}{q_1}, \frac{p_2}{q_2}, \cdots, \frac{p_n}{q_n} \right| e \right)$ be a 2-component Montesinos link.
If there is no $H$-tangle in $L$, then the linking number of $L$ is
$$lk(L) = \pm \left( \sum_{k=1}^n lk(R_{(q_k+e_k p_k)/p_k})+\cfrac{1}{2} \left(e-\sum_{k=1}^n e_k \right ) \right),$$
where $e_i=\cfrac{1-(-1)^{q_i}}{2}$.
\end{theorem}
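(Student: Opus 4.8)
The plan is to follow the strategy of the proofs of Theorems~\ref{thm:mpe} and~\ref{thm:tqe}: fix one orientation of the two components $C_1,C_2$ of $L$, add up the signs of all crossings between $C_1$ and $C_2$, and take absolute value at the end so that the answer is only asserted up to the global sign $\pm$. Since $L$ has no $H$-tangle every $p_i$ is odd, so each $T_i$ is a $V$-tangle when $q_i$ is even (equivalently $e_i=0$) and a $D$-tangle when $q_i$ is odd (equivalently $e_i=1$); in particular $\sum_k e_k$ is the number of $D$-tangles. First I would pin down how $C_1$ and $C_2$ thread through the chain: I claim each tangle $T_i$, as well as the block of $e$ half twists, is traversed by both components, one strand of each, so that the crossings between $C_1$ and $C_2$ are exactly the crossings lying inside some $T_i$ together with the $e$ crossings of the twist block. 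This bi-colouring is essentially forced --- a mono-coloured tangle would change the component count, contradicting that $L$ is $2$-component --- and verifying it, along with recording how many $D$-tangles each component meets, is the same kind of setup as in the proof of Theorem~\ref{thm:tqe}.

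Next I would localize the count tangle by tangle. The signed number of crossings between the two strands running through a fixed $T_i$ depends only on $T_i$ and on the orientations of those two strands, not on how they close up in the rest of $L$, so it may be computed inside any convenient closure of $T_i$. If $T_i$ is a $V$-tangle, closing its two strands off on the left and on the right produces the denominator closure of $T_i$, which is the rational $\frac{q_i}{p_i}$-link $R_{q_i/p_i}=R_{(q_i+e_i p_i)/p_i}$; its numerator $q_i+e_i p_i=q_i$ is even (and coprime to $p_i$), so Proposition~\ref{prop:link} applies and the crossings of $C_1\cup C_2$ inside $T_i$ contribute $\pm\,lk(R_{(q_i+e_i p_i)/p_i})$. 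If $T_i$ is a $D$-tangle the analogous closure is a knot, so I would instead close it up with one extra horizontal half twist inserted, which replaces the fraction $\frac{q_i}{p_i}$ of that closure by $\frac{q_i+p_i}{p_i}=\frac{q_i+e_i p_i}{p_i}$ (even numerator, hence an $H$-form tangle whose numerator closure is the two-component link $R_{(q_i+e_i p_i)/p_i}$), and then subtract the contribution of the inserted half twist; thus the crossings inside $T_i$ contribute $\pm\big(lk(R_{(q_i+e_i p_i)/p_i})-\tfrac12 e_i\big)$, where $-\tfrac12 e_i$ corrects for the half twist that is present in $R_{(q_i+e_i p_i)/p_i}$ but not in $T_i$.

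Finally the block of $e$ half twists, oriented compatibly with $C_1$ and $C_2$, contributes $\tfrac12 e$ to the signed crossing count, since each half twist between the two components contributes $\tfrac12$, exactly as in Proposition~\ref{prop:alg}(3). Adding the tangle contributions and the twist-block contribution, the $n$ corrections $\tfrac12 e_k$ combine with $\tfrac12 e$ to give
$$lk(L)=\pm\left(\sum_{k=1}^{n} lk\!\left(R_{(q_k+e_k p_k)/p_k}\right)+\frac12\Big(e-\sum_{k=1}^{n}e_k\Big)\right);$$
that $e-\sum_k e_k$ is even, so the right side is an integer, follows from $\sigma=\sum_k q_k+e$ being even --- which holds because $L$ is $2$-component with no $H$-tangle --- together with $e_k\equiv q_k\pmod 2$. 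The main obstacle is the sign and parity bookkeeping in the second step: I must choose the single global orientation so that all the per-tangle and twist-block contributions combine with one coherent sign, and so that every $D$-tangle's inserted half twist enters with the \emph{same} sign, so that the corrections genuinely sum to $-\tfrac12\sum_k e_k$ rather than to some alternating expression. As in Theorem~\ref{thm:tqe}, this should reduce to a parity count of the $D$-tangles lying between two prescribed tangles along the cyclic chain; carrying that out carefully, together with the bi-colouring claim, is where the real work lies, while the reductions to the links $R_{(q_k+e_k p_k)/p_k}$ and the $\tfrac12 e$ count are routine once Propositions~\ref{prop:link} and~\ref{prop:alg} are in hand.
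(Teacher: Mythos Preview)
Your plan is sound and reaches the same formula, but it differs from the paper's route in one essential manoeuvre. Rather than compute each $T_i$'s contribution \emph{in situ} and correct the $D$-tangles one at a time, the paper first performs a global transformation of the diagram: for every $D$-tangle $T_i$ it inserts a cancelling pair of half twists just below $T_i$ via a Reidemeister~II move and then flypes the negative twist all the way past $T_{i+1},\dots,T_n$ into the twist block. After all such flypes one obtains the alternate Montesinos presentation
\[
L=M\!\left(\left.\frac{p_1}{q_1+e_1p_1},\dots,\frac{p_n}{q_n+e_np_n}\;\right|\;e-\sum_{k=1}^n e_k\right),
\]
in which every tangle is a $V$-tangle. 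In this presentation both strands of every tangle carry the uniform ``NW$\to$SW, NE$\to$SE'' orientation, so there is no bookkeeping at all: the paper simply rotates each $V$-tangle by $\tfrac{\pi}{2}$ to identify its signed crossing sum with $2\,lk(R_{(q_k+e_kp_k)/p_k})$, adds the $e-\sum_k e_k$ crossings of the new twist block, and halves.

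What this buys is exactly the elimination of the obstacle you single out. Your worry that the per-tangle signs might form ``some alternating expression'' governed by the parity of $D$-tangles between two positions is actually unfounded: once the global orientation is fixed, both strands of every tangle (and of the twist block) are oriented from the N-side to the S-side, irrespective of how many $D$-tangles precede it, so all your local contributions and all inserted half twists carry one and the same sign. Seeing this is precisely what the flype makes transparent, whereas in your local approach it is a fact you still have to argue. Your decomposition is therefore correct, but the paper's flype repackages the same half-twist corrections as a single change of Montesinos parameters and thereby sidesteps the sign analysis entirely.
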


\begin{proof}
Let $L$ be a 2-component Montesinos link which consists of $n$ rational $\frac{p_i}{q_i}$-tangles $T_i$ with $e$ half twists.
We further assume that there is no $H$-tangle in $L$.
Choose an orientation of $L$ as drawn in Figure~\ref{fig:case1}~(a).

\begin{figure}[h!]
\centering
\includegraphics{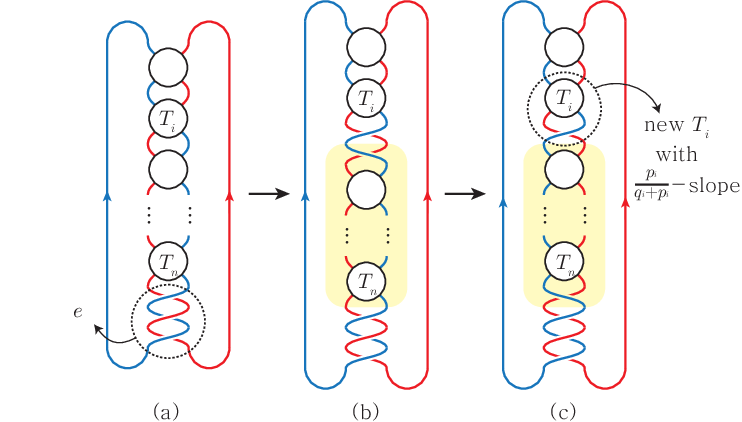}
\caption{A 2-component Montesinos link with no $H$-tangle.}
\label{fig:case1}
\end{figure}

First we transform $L$ into the form that contains no $D$-tangle.
If there is a $D$-tangle $T_i$, then we transform it to $V$-tangle as the following process.
We add a positive half-twist and a negative half-twist below $T_i$ by using a type II Reidemeister move as drawn in Figure~\ref{fig:case1}~(b).
Use a flype the shaded region of the figure, which contains the area from the added negative twist to the tangle $T_n$.
Since the rational tangle can be represented by the pillowcase form, every tangle in the shaded region is not changed after the flype.
Then the slope of $T_i$ is changed from $\frac{p_i}{q_i}$ to $\frac{p_i}{q_i+p_i}$, and one negative half twist is added below the $T_n$ as drawn in Figure~\ref{fig:case1}~(c).
Repeat this process until there is no $D$-tangle.
Then the slope of every rational tangle is changed from $\frac{p_i}{q_i}$ to $\frac{p_i}{q_i+e_i p_i}$ and $\displaystyle{\sum_{k=1}^n e_k}$ negative half twists are added below the $T_n$, where $e_i=\frac{1-(-1)^{q_i}}{2}$.
Remark that $e_i$ is equal to 0 when $q_i$ is even, and 1 when $q_i$ is odd.
Furthermore, by combining the added negative twists and the part of $e$ half twists, there are $e-\displaystyle{\sum_{k=1}^n e_k}$ half twists below the $T_n$.
Therefore, $L$ can be regarded as a new expression,
$$L=\displaystyle{M \left( \left. \frac{p_1}{q_1+e_1p_1}, \dots, \frac{p_n}{q_n+e_np_n} \right| e-\sum_{k=1}^n e_k \right)}.$$

Henceforth, we use the new expression of $L$ which consists of only $V$-tangles.
Then one component of $L$ passes through every tangle from NW to SW and the other from NE to SE.
Consider a rational $\frac{p}{q}$-tangle as a pillowcase $(p,q)$-form as drawn in Figure~\ref{fig:tangles}~(a).
Rotate the tangle through an angle $\frac{\pi}{2}$ in the clockwise direction.
Then $(p,q)$-form is changed to $(-q,p)$-form as drawn in Figure~\ref{fig:tangles}~(b).
Note that a rational link with $(-q,p)$-form is $R_{(-q)/p}$.
This means that the sum of signs of all crossings between different components of $(-q,p)$-form is equal to $-2lk(R_{q/p})$ when the pillowcase form has the same orientation as Figure~\ref{fig:tangles}~(c).

\begin{figure}[h!]
\centering
\includegraphics{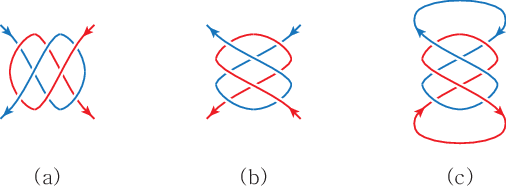}
\caption{(a) a pillowcase form of $\frac{p}{q}$-tangle,
(b) rotated $\frac{p}{q}$-tangle,
(c) the oriented rational link $R_{(-q)/p}$.}
\label{fig:tangles}
\end{figure}

Considering the difference in orientation between a tangle in Figure~\ref{fig:tangles}~(a) and a link in Figure~\ref{fig:tangles}~(c), the sum of signs of all crossings between different components in the tangle is equal to $2lk(R_{q/p})$.
Therefore the sum of signs of all crossings between different components in $L$ is equal to
$$\sum_{k=1}^n 2lk(R_{(q_k+e_k p_k)/p_k})+ \left(e-\sum_{k=1}^n e_k \right ).$$
The absolute value of the linking number is not affected by the orientation of each component.
Therefore, the linking number of $L$ is
$$lk(L) = \pm \left( \sum_{k=1}^n lk(R_{(q_k+e_k p_k)/p_k})+\cfrac{1}{2} \left(e-\sum_{k=1}^n e_k \right ) \right).$$
\end{proof}

\section*{acknowledgement}

The first author(Hyoungjun Kim) was supported by the National Research Foundation of Korea (NRF) grant funded by the Korea government Ministry of Science and ICT(NRF-2021R1C1C1012299 and NRF-2022M3J6A1063595).
The corresponding author(Sungjong No) was supported by the National Research Foundation of Korea(NRF) grant funded by the Korea government Ministry of Science and ICT(NRF-2020R1G1A1A01101724).
The third author (Hyungkee Yoo) was supported by Basic Science Research Program of the National Research Foundation of Korea (NRF) grant funded by the Korea government Ministry of Education (RS-2023-00244488).

\end{document}